 \newtheorem{theorem}{Theorem}
  \newtheorem{prop}[theorem]{Proposition}
 \newtheorem{coro}[theorem]{Corollary}
\theoremstyle{definition}
\newtheorem*{remark}{Remark}
\newtheorem{eje}{Example}
\begin{document}

\title{Where did the examples of Abel's continuity theorem go?}

\author{Sergio A. Carrillo}
\address{Programa de Matem\'{a}ticas, Universidad Sergio Arboleda, Calle 74, $\#$ 14-14, Bogot\'{a}, Colombia.}\email{sergio.carrillo@usa.edu.co}

\keywords{Abel's lemma, sum of series, special functions}

\thanks{Supported by Ministerio de Econom\'{i}a y Competitividad from Spain, under the Project ``M\'{e}todos asint\'{o}ticos, algebraicos y geom\'{e}tricos en foliaciones singulares y sistemas din\'{a}micos" (Ref.: PID2019-105621GB-I00) and Univ. Sergio Arboleda project ``An\'{a}lisis complejo, ecuaciones diferenciales y sumabilidad" (IN.BG.086.20.002).}

\subjclass[2010]{Primary: 30B30, 40A05. Secondary: 33E20}

\maketitle

\begin{abstract}
Abel's continuity theorem for power series is a great tool to compute sums and prove properties of special functions. However, apart from two basic examples, this no longer occupies a place in Analysis courses. This note is an invitation to get acquainted with this theorem, its history, and many of its applications dispersed in the literature.
\end{abstract}

\section{Introduction.}

It is very likely while studying a first course on Real Analysis \cite[Thm. 8.2]{Rudin real} or Complex
Function Theory \cite[p. 41]{Ahlfors}, you learned the main character of this note, namely,

\begin{theorem}[Abel]\label{T Abel} If the series of complex numbers $\sum_{n=0}^\infty a_n$ converges to $s$, then $f(x)=\sum_{n=0}^\infty a_n x^n$ converges for $-1<x<1$ and $\lim_{x\rightarrow 1^-} f(x)=s.$
\end{theorem}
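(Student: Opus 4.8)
The plan is to treat the two assertions separately, dispatching the convergence on $(-1,1)$ quickly and then extracting the boundary limit from a summation-by-parts identity. For the first part: since $\sum_{n=0}^\infty a_n$ converges, its general term tends to $0$, so $(a_n)$ is bounded, say $|a_n|\le M$ for all $n$. Then $|a_n x^n|\le M|x|^n$ for $|x|<1$, and comparison with the geometric series gives absolute (hence ordinary) convergence of $f(x)=\sum_{n=0}^\infty a_n x^n$ on $-1<x<1$; in fact this shows the radius of convergence is at least $1$.

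For the limit, I would introduce the partial sums $s_n=\sum_{k=0}^n a_k$, with the convention $s_{-1}=0$, so that $a_n=s_n-s_{n-1}$. Abel's summation by parts applied to the finite sum, after reindexing the shifted copy, yields
$$\sum_{n=0}^N a_n x^n=(1-x)\sum_{n=0}^{N-1}s_n x^n+s_N x^N.$$
Fixing $x$ with $|x|<1$ and letting $N\to\infty$, the term $s_N x^N$ vanishes because $(s_N)$ is bounded (it converges to $s$) and $x^N\to 0$, while $\sum s_n x^n$ converges absolutely for the same reason; this produces the key identity $f(x)=(1-x)\sum_{n=0}^\infty s_n x^n$, valid for $|x|<1$.

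Finally I would compare $f(x)$ with $s$. Since $(1-x)\sum_{n=0}^\infty x^n=1$ for $|x|<1$, we may write $s=(1-x)\sum_{n=0}^\infty s\,x^n$, and subtracting gives $f(x)-s=(1-x)\sum_{n=0}^\infty (s_n-s)x^n$. Given $\varepsilon>0$, choose $N$ with $|s_n-s|<\varepsilon$ for all $n>N$. Restricting to $0<x<1$ and splitting the series at $N$, the tail is at most $\varepsilon(1-x)\sum_{n>N}x^n\le\varepsilon$, while the head is $(1-x)$ times a fixed polynomial, hence bounded by $(1-x)\sum_{n=0}^N|s_n-s|$, which tends to $0$ as $x\to 1^-$. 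Thus $\limsup_{x\to 1^-}|f(x)-s|\le\varepsilon$ for every $\varepsilon>0$, giving $\lim_{x\to 1^-}f(x)=s$.

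The decisive step — and the one I expect to be the main obstacle for a reader seeing it for the first time — is the summation-by-parts identity $f(x)=(1-x)\sum_{n=0}^\infty s_n x^n$: it trades the series $\sum a_n x^n$, whose behavior near $x=1$ is delicate because $\sum a_n$ may converge only conditionally, for one built from the \emph{bounded, convergent} sequence $(s_n)$ weighted by the averaging kernel $(1-x)x^n$. Once that reformulation is in hand, the head/tail estimate is routine; one only needs to be a little careful that the reindexing survives the passage $N\to\infty$, which is exactly why boundedness of $(s_N)$ and $|x|<1$ are invoked explicitly.
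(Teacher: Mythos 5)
Your proof is correct and complete: the summation-by-parts identity $f(x)=(1-x)\sum_{n=0}^\infty s_n x^n$ followed by the head/tail estimate against $s=(1-x)\sum_{n=0}^\infty s\,x^n$ is exactly the classical argument. The paper itself gives no proof of this theorem, deferring to the cited references (Rudin, Thm.\ 8.2; Ahlfors, p.\ 41), and your argument is precisely the standard proof found there, so there is nothing to reconcile.
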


Usually, it is taught in an extended form, due to O. Stolz, asserting we can take $x\in\mathbb{C}$ with $|x|<1$, and the limit holds if $|1-x|/(1-|x|)$ remains bounded. Quickly after
proving it, you find the converse is false: Grandi's series  $1-1+1-1+\cdots$ diverges, but $1-x+x^2-x^3+\cdots=\frac{1}{1+x}$ goes to $1/2$ as $x\to 1^-$. Enthusiastic teachers mention the series is \textit{Abel summable} to $1/2$. Later, it is certain that through the power series $
\log(1+x)=\sum_{n=0}^\infty \frac{(-1)^n}{n+1} x^{n+1}$ and $\arctan(x)=\sum_{n=0}^\infty \frac{(-1)^n}{2n+1} x^{2n+1}$, 
convergent for $|x|<1$, the first examples you meet are \begin{equation}\label{Eq. ln 2 pi4}
1-\frac{1}{2}+\frac{1}{3}-\frac{1}{4}+\cdots=\ln(2),\qquad 1-\frac{1}{3}+\frac{1}{5}-\frac{1}{7}+\cdots=\arctan(1)=\frac{\pi}{4}.\end{equation} 

The latter is the Leibniz formula for $\pi$ (1676), but first discovered by Madhava of the Indian Kerala school in the 14th century. Now, if you were discussing the Cauchy product
of series, the second application is another result of Abel: if $\sum_{n=0}^\infty a_n$, $\sum_{n=0}^\infty b_n$ and $\sum_{n=0}^\infty (\sum_{k=0}^n a_k b_{n-k})$ converge to $a, b$ and $c$, respectively, then $c=ab$ \cite{Rudin real}. 

But, what about some other elementary examples? After all, it is always stressed Abel's theorem is such a powerful tool. The answer will depend on your experience, but unfortunately the response from young students may not be very encouraging.

This note is aimed to contribute on this point by collecting an array of examples at a basic level, showing the reach of Theorem \ref{T Abel}. The five examples in the first two sections go from Euler's first contribution to Basel problem, to values of Dirichlet $L$-functions, and they can be read independently. Then, we address another one under
the lenses of special functions, including a definite integral due to Gauss. This integral is based on Theorem \ref{Main Thm.}, whose proof is sketched at the end as an invitation for the readers to roll up their sleeves and make some calculations!

\section{A bit of history. First examples}

Theorem \ref{T Abel} was proved by N. Abel in his 1826 paper
\cite{Abel} (\textit{Lehrsatz VI}, page 316), see \cite[Ch.16]{Stedall} for a translation into English. His goal was to ``fill a gap" in the proof of \begin{equation}\label{Ec. Binomial}
(1+x)^m=1+\sum_{n=1}^\infty \frac{m(m-1)\cdots (m-n+1)}{n!}x^n,
\end{equation} ``for all real or imaginary values of $x$ and $m$ for which the binomial series is convergent''. The work was based on Cauchy's rigorous \textit{Cours d'analyse de l'\'{e}cole polytechnique}, who had only considered the problem for real $m$. Nowadays, we learn from an early stage that if $m$ is not a positive integer, this series converges for $|x|<1$ (ratio test). Then, we find its sum solving the initial-value problem $(1+x)y'(x)=m\cdot y(x)$, $y(0)=1$ \cite[p. 201]{Rudin real}. What happens on the boundary $|x|=1$? That was precisely the purpose with Theorem \ref{T Abel}! as (\ref{Ec. Binomial}) will hold as long as the convergence is established. The answer is: for $x\neq -1$, the series converges for $\text{Re}(m)>-1$. For $x=-1$ the
same is true for $\text{Re}(m)>0$, see \cite[p.400--402]{Mark} for an accessible proof.

Abel's achievement was not only to have settled down the long-standing problem on giving a rigorous treatment of the binomial theorem \cite{Coolidge}. It was also the starting point for further research on the theory of power series. For instance, Dirichlet provided another proof of Theorem \ref{T Abel} using the test that today bears his name\footnote{Dirichlet test: if $(a_{n})$ is a monotonically decreasing sequence of real numbers such that $a_n\to 0$ as $n\to+\infty$, and $(b_n)$ a sequence of complex numbers with bounded partial sums, then $\sum_{n=0}^\infty a_nb_n$ converges. The case $b_n=(-1)^{n}$ is precisely Leibniz's test.}. On the other hand, it inspired a deeper study of continuity for series of functions. In fact, Abel detected an \textit{exception} to Cauchy's erroneous theorem that an infinite sum of continuous functions is itself continuous \cite{Sorensen}, that had to wait to be corrected after the notion of uniform convergence was given by Weierstrass \cite[p. 524--527]{Stedall}. The reader may also find it instructive to read Hardy's discussion on what Theorem \ref{T Abel} brought \cite{Hardy1907}.

Let's turn to examples now, starting with three basic ones to warm up: one comes from the Basel problem, the second one from rearrangements of conditional convergent series, and the last one is borrowed from Fourier series.

\begin{eje}\label{Dilog} The Basel problem consisted of finding the exact value of the sum $\zeta(2)=\sum_{n=1}^\infty 1/n^2$. The great achievement $\zeta(2)=\pi^2/6$ was obtained by L. Euler in 1734 who gave different proofs in the years to come \cite{Ayoub,Varadarajan}. However, his first contribution occurred in 1731 with a numerical approximation. He showed that \begin{equation}\label{Eq. zeta2}
	\zeta(2)=\sum_{n=1}^\infty \frac{x^n}{n^2}+\sum_{n=1}^\infty \frac{(1-x)^n}{n^2}+\log(x)\log(1-x),\quad 0<x<1,
	\end{equation} and setting $x=1/2$ he found $\zeta(2)=(\ln 2)^2+\sum_{n=1}^\infty \frac{1}{2^{n-1}n^2}$. But this and the series $\ln(2)=-\ln(1-1/2)=\sum_{n=1}^\infty \frac{1}{n 2^n}$ are rapidly convergent, which led him to the value $\zeta(2)\approx 1.644934$ correct to $6$ decimal places.  Nowadays, (\ref{Eq. zeta2}) is recognized as one of the functional equations of the \textit{dilogarithm} $\text{Li}_2(x):=\sum_{n=1}^\infty \frac{x^n}{n^2}$, $|x|<1$ \cite[Sec. 2.6]{Andrews}, which produces the special value $$\text{Li}_2\left(1/2\right)=\sum_{n=1}^\infty \frac{1}{2^{n}n^2}=\frac{\pi^2}{12}-\frac{(\ln 2)^2}{2}.$$ Although Abel's theorem was not used to solve Basel problem, it plays a part in
	the story as it provides a proof of (\ref{Eq. zeta2}).  Indeed, its right hand side is constant as we may
	check by taking its derivative and finding it is zero. Letting $x\to 1^-$, this constant is $$\lim_{x\to 1^-} \sum_{n=1}^\infty \frac{x^n}{n^2}+\sum_{n=1}^\infty \frac{(1-x)^n}{n^2}+\log(x)\log(1-x)=\sum_{n=1}^\infty \frac{1}{n^2}=\zeta(2),$$ as claimed, since $\log(x)\log(1-x)\to 0$ as $x\to 1^-$. 	
\end{eje}

\begin{eje} Riemann's rearrangement theorem asserts that a conditionally convergent series of real numbers can be rearranged to sum to any real value, including $\pm\infty$ \cite[Thm. 3.54]{Rudin real}. Giving examples is not an easy task and usually they are confined to the alternating harmonic series $\sum_{n=1}^\infty (-1)^n/n$. The common approach is based on the existence of the limit \begin{equation}\label{Eq. EulerMascheroni}
	\gamma:=\lim_{ N\to+\infty} 1+\frac{1}{2}+\cdots+\frac{1}{N}-\ln(N)=0.577215\dots,
	\end{equation} known as the \textit{Euler-Mascheroni} constant. However, Theorem \ref{T Abel} comes in handy for calculating some sums of rearrangements when used correctly. Let us illustrate this point with $$1+\frac{1}{3}-\frac{1}{2}+\frac{1}{5}+\frac{1}{7}-\frac{1}{4}+\frac{1}{9}+\frac{1}{11}-\frac{1}{6}+\cdots.
	$$	If we jump directly to consider its generating series, we end up with the function \begin{equation}\label{Eq. Ex2}
	1+\frac{x}{3}-\frac{x^2}{2}+\frac{x^3}{5}+\frac{x^4}{7}-\frac{x^5}{4}+\cdots=\sum_{n=0}^\infty \frac{x^{3n}}{4n+1}+\frac{x^{3n+1}}{4n+3}-\frac{x^{3n+2}}{2n+2}.
	\end{equation}
	We can find a ``closed" expression for it using the Gauss hypergeometric series \begin{equation}\label{Eq. Hyper}
	{}_{2}F_{1}(a,b;c;z):=1+\sum _{n=1}^{\infty }{\frac {(a)_{n}(b)_{n}}{(c)_{n}}}{\frac {z^{n}}{n!}},
	\end{equation} where $(q)_{n}:=q(q+1)\cdots (q+n-1)$, $n>0$, stands for the Pochhammer symbol. Here $a,b,c$ can be complex numbers only restricted to $c\neq 0,-1,-2,\dots$. The ratio test shows (\ref{Eq. Hyper}) converges for $|z|<1$, assuming $a,b\neq 0,-1,-2,\dots$, otherwise ${}_2F_1$ reduces to a polynomial. Now, noticing that $(q+n)\cdot (q)_n=q\cdot (q+1)_n$ and $(1)_n=n!$, we find ${}_{2}F_{1}(a,1;a+1;z)=a \sum_{n=0}^\infty \frac{z^n}{n+a}$. Therefore, (\ref{Eq. Ex2}) is equal to
	$${}_{2}F_{1}\left({1}/{4},1;{5}/{4};x^3\right)+\frac{x}{3}  {}_{2}F_{1}\left({3}/{4},1;{7}/{4};x^3\right)+\frac{\log(1- x^3)}{2x},$$ but letting $x\to 1^-$ seems unlikely without further knowledge on these functions. However, we can place zeros adequately to find a more familiar  generating series \cite{Alternating}. If we consider $1+0+\frac{1}{3}-\frac{1}{2}+\frac{1}{5}+0+\frac{1}{7}-\frac{1}{4}+\frac{1}{9}+0+\frac{1}{11}-\frac{1}{6}+\cdots,$ where the zeros are placed in the even positions which are not multiples of $4$, we get \begin{align*}
	&x+\frac{x^3}{3}-\frac{x^4}{2}+\frac{x^5}{5}+\frac{x^7}{7}-\frac{x^8}{4}+\cdots=\sum_{n=0}^\infty \frac{x^{2n+1}}{2n+1}-\sum_{n=1}^\infty \frac{x^{4n}}{2n}\\
	&=\frac{1}{2}\left[\log(1+x)-\log(1-x)+\log(1-x^4)\right]
	=\frac{1}{2}\log((1+x)^2(1+x^2)),
	\end{align*} valid for $|x|<1$. Finally, letting $x\to1^-$ leads to the value $\frac{3}{2}\ln(2)$. The same procedure works for rearrangements
	in which blocks of $p$ positive terms alternate with blocks of $q$ negative terms. The reader can verify that the function $$\frac{1}{2}[\log(1+x^q)-\log(1-x^q)+\log(1-x^{2p})]
	=\frac{1}{2}\cdot \log\left[(1+x^q)\frac{1-x^{2p}}{1-x^q}\right]$$ does the trick, and by taking $x\to 1^-$, we get the value $\ln(2)+\frac{1}{2}\ln(p/q)$.
\end{eje}

\begin{remark} Placing zeros in a convergent series doesn't change convergence or the value of the sum. Indeed, simply notice the sequence of partial sums of the series with additional zeros repeats a finite number of times each member of the original sequence of partial sums. Additionally, if a series is absolutely convergent, we can rearrange or group its terms as we please. These facts will be used without further mention.
\end{remark}

\begin{eje}\label{Example cos(nt)/n sin(nt)/t} The theory of Fourier series offers tools to compute sums such as Parseval's identity. Humbly, Abel's theorem contributes here as well. The classical example is  $\sum_{n=0}^\infty e^{i(n+1)\theta}/(n+1)$, for $0<|\theta|<\pi$. The convergence follows from Dirichlet's test: the partial sums $\sum_{j=1}^k e^{ij\theta}$ are bounded because  \begin{equation}\label{Eq. sum eit}
	\sum_{j=1}^k e^{ji\theta}=\frac{e^{ki\theta }-1}{e^{i\theta}-1}e^{i\theta}=\frac{e^{ki\theta/2}-e^{-ki\theta/2}}{e^{i\theta/2}-e^{-i\theta/2}}e^{i\theta/2}e^{ki\theta/2}=\frac{\sin\left(k\theta/2\right)}{\sin\left(\theta/2\right)}e^{(k+1)i\theta/2},
	\end{equation} and $|\sum_{j=1}^k e^{ij\theta}|\leq 1/|\sin(\theta/2)|$.  Then, Theorem \ref{T Abel} shows that \begin{equation}\label{Eq. eitn}
	\sum_{n=0}^\infty \frac{e^{i
			(n+1)\theta}}{n+1}=\lim_{x\rightarrow 1^-} \sum_{n=0}^\infty \frac{x^{n+1}e^{i(n+1)\theta}}{n+1}=-\log(1-e^{i\theta}),
	\end{equation} where now $\log(z)=\ln|z|+i\text{arg}(z)$ is the principal branch, i.e.,  $|\text{arg}(z)|<\pi$. The middle series can also be written as $$e^{i\theta}\cdot  \sum_{n=0}^\infty \int_0^x  e^{in\theta}t^n dt=e^{i\theta}\cdot \int_0^x \frac{dt}{1-e^{i\theta}t}=\int_0^x \frac{dt}{e^{-i\theta}-t},$$ where term by term integration is allowed due to the uniform convergence of the geometric series for $|t|\leq x<1$. Thus, taking $x\to1^-$ and changing $\theta$ by $-\theta$, proves
	\begin{equation}\label{Eq. Lema2}
	\int_0^1 \frac{dt}{t-e^{i\theta}}=\ln(2)+\ln\left(\sin\frac{|\theta|}{2}\right)+i\cdot \text{arg}(1-e^{-i\theta}),\qquad 0<|\theta|<\pi,
	\end{equation} which will be useful later. On the other hand, equating real and imaginary parts in (\ref{Eq. eitn}), 
	$$\sum_{n=1}^\infty \frac{\cos(n\theta)}{n}=-\ln(2)-\ln\left(\sin\frac{|\theta|}{2}\right),\quad  \sum_{n=1}^\infty \frac{\sin(n\theta)}{n}=-\text{arg}(1-e^{i\theta}).$$ The values in (\ref{Eq. ln 2 pi4}) are the cases $\theta=\pi$ and $\theta=\pi/2$, respectively. Here we used  \begin{equation}\label{Eq. mod 1-ei0}
	|1-e^{i\theta}|^2=(1-\cos(\theta))^2+\sin(\theta)^2=2(1-\cos(\theta))=4\sin(\theta/2)^2.
	\end{equation} This can also be justified using the law of sines in Figure \ref{fig:1}, which in turn shows that
	\begin{equation}\label{Eq. Arg}
	\text{arg}(1-e^{i\theta})=\frac{\theta-\pi}{2}, \text{  if } 0<\theta<\pi,\text{ and equal to } \frac{\pi+\theta}{2}, \text{ if} -\pi<\theta<0.
	\end{equation}

	\begin{figure}[h]
		\tikzset{every picture/.style={line width=0.65pt}} 

		\begin{tikzpicture}[x=0.85pt,y=0.85pt,yscale=-.8,xscale=.8]
		\tikzset{every picture/.style={line width=0.75pt}}

		\draw  (59.25,133.74) -- (273.45,133.74)(164.85,24.07) -- (164.85,238.27) (266.45,128.74) -- (273.45,133.74) -- (266.45,138.74) (159.85,31.07) -- (164.85,24.07) -- (169.85,31.07)  ;
		\draw   (85.81,133.74) .. controls (85.81,90.08) and (121.2,54.7) .. (164.85,54.7) .. controls (208.5,54.7) and (243.89,90.08) .. (243.89,133.74) .. controls (243.89,177.39) and (208.5,212.78) .. (164.85,212.78) .. controls (121.2,212.78) and (85.81,177.39) .. (85.81,133.74) -- cycle ;
		\draw    (118.5,70.57) -- (164.85,133.74) ;
		\draw  [draw opacity=0] (181.27,133.47) .. controls (181.13,124.74) and (173.71,117.75) .. (164.66,117.86) .. controls (161.2,117.9) and (158.01,118.97) .. (155.38,120.77) -- (164.85,133.74) -- cycle ; \draw   (181.27,133.47) .. controls (181.13,124.74) and (173.71,117.75) .. (164.66,117.86) .. controls (161.2,117.9) and (158.01,118.97) .. (155.38,120.77) ;
		\draw  (321.77,133.44) -- (547.8,133.44)(372.81,24.9) -- (372.81,236.79) (540.8,128.44) -- (547.8,133.44) -- (540.8,138.44) (367.81,31.9) -- (372.81,24.9) -- (377.81,31.9)  ;
		\draw   (372.81,133.44) .. controls (372.81,89.78) and (408.2,54.4) .. (451.85,54.4) .. controls (495.5,54.4) and (530.89,89.78) .. (530.89,133.44) .. controls (530.89,177.09) and (495.5,212.48) .. (451.85,212.48) .. controls (408.2,212.48) and (372.81,177.09) .. (372.81,133.44) -- cycle ;
		\draw  [draw opacity=0] (445.71,134.28) .. controls (446.29,138.35) and (449.82,141.45) .. (454.05,141.4) .. controls (455.84,141.38) and (457.5,140.79) .. (458.84,139.81) -- (453.95,133.09) -- cycle ; \draw   (445.71,134.28) .. controls (446.29,138.35) and (449.82,141.45) .. (454.05,141.4) .. controls (455.84,141.38) and (457.5,140.79) .. (458.84,139.81) ;
		\draw  [draw opacity=0] (393.21,140.98) .. controls (393.48,140.36) and (393.7,139.71) .. (393.84,139.02) .. controls (394.26,137.07) and (394.05,135.14) .. (393.35,133.47) -- (385.72,137.3) -- cycle ; \draw   (393.21,140.98) .. controls (393.48,140.36) and (393.7,139.71) .. (393.84,139.02) .. controls (394.26,137.07) and (394.05,135.14) .. (393.35,133.47) ;
		\draw    (164.85,133.74) -- (191.3,167.2) -- (214.9,195.25) ;
		\draw  [dash pattern={on 0.84pt off 2.51pt}]  (372.81,133.44) -- (503.9,194.25) ;
		\draw    (453.95,133.09) -- (502.9,193.25) ;
		
		\draw (270,138) node [anchor=north west][inner sep=0.75pt]   [align=left] {$\displaystyle x$};
		\draw (172,17) node [anchor=north west][inner sep=0.75pt]   [align=left] {$\displaystyle y$};
		\draw (172,107) node [anchor=north west][inner sep=0.75pt]  [font=\scriptsize] [align=left] {$\displaystyle \theta $};
		\draw (105,51) node [anchor=north west][inner sep=0.75pt]  [font=\footnotesize] [align=left] {$\displaystyle e^{i\theta }$};
		\draw (245.89,136.74) node [anchor=north west][inner sep=0.75pt]  [font=\scriptsize] [align=left] {$\displaystyle 1$};
		\draw (381,17.7) node [anchor=north west][inner sep=0.75pt]   [align=left] {$\displaystyle y$};
		\draw (442,140.97) node [anchor=north west][inner sep=0.75pt]  [font=\tiny] [align=left] {$\displaystyle \theta $};
		\draw (506,193.7) node [anchor=north west][inner sep=0.75pt]  [font=\footnotesize] [align=left] {$\displaystyle 1-e^{i\theta }$};
		\draw (449.02,121.6) node [anchor=north west][inner sep=0.75pt]  [font=\scriptsize] [align=left] {$\displaystyle 1$};
		\draw (543,137) node [anchor=north west][inner sep=0.75pt]   [align=left] {$\displaystyle x$};
		\draw (400.25,135.97) node [anchor=north west][inner sep=0.75pt]  [font=\tiny] [align=left] {$\frac{\pi -\theta }{2}$};
		\draw (209.9,194.25) node [anchor=north west][inner sep=0.75pt]  [font=\footnotesize] [align=left] {$\displaystyle -e^{i\theta }$};
		
		\end{tikzpicture}
		\caption{The determination of $\text{arg}(1-e^{i\theta})$ using an isosceles triangle.}
		\label{fig:1}	
	\end{figure}
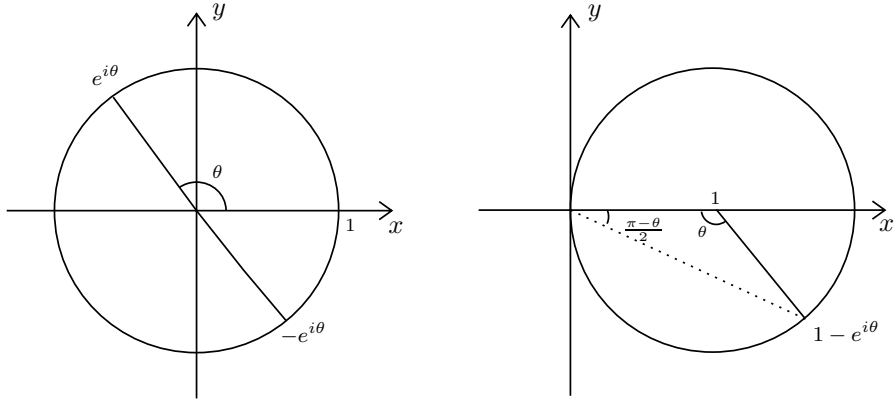

	We leave to the reader as exercise to check by the same method that $$\sum_{n=1}^\infty (-1)^{n+1}\frac{\cos(n\theta)}{n}=\ln\left(2\cos\frac{\theta}{2}\right),\quad  \sum_{n=1}^\infty (-1)^{n+1}\frac{\sin(n\theta)}{n}=\frac{\theta}{2},\quad |\theta|<\pi.$$ Note that the second function breaks if $\theta=\pi$! This was precisely Abel's counterexample to Cauchy's continuity assertion on series of functions. Furthermore, the attentive reader may recognize Abel's example (\ref{Ec. Binomial}) for $x=e^{i\theta}$ as the Fourier series $$\sum_{n=0}^\infty \binom{\alpha}{n} e^{in\theta}=(1+e^{i\theta})^{\alpha}=\exp(\alpha\log(1+e^{i\theta}))=\left[2\cos\left({\theta}/{2}\right)\right]^{\alpha} e^{i\alpha \theta/2},$$ where $\binom{\alpha}{n}=\frac{\alpha(\alpha-1)\cdots(\alpha-n+1)}{n!}$, for $\text{Re}(\alpha)>-1$ and $|\theta|<\pi$, as mentioned in page 2.
\end{eje}

\section{Two examples from definite integrals}

Another source of examples are series as definite integrals, and we can gladly rely on classical books like Bromwich's \cite[p. 189-190]{Bromwich} or Knopp's \cite[p.236, p.269]{Knopp} to bring them here. We have chosen two families of series, which were familiar to Euler and
Gauss, and remarkably used by Dirichlet in Number Theory.

\begin{eje}\label{Ex. Knopp} The first collection comprises alternating series of the form
	\begin{align*}
	1-\frac{1}{4}+\frac{1}{7}-\frac{1}{10}&+\cdots=\frac{\ln 2}{3}+ \frac{\pi}{3\sqrt{3}},\quad \frac{1}{2}-\frac{1}{5}+\frac{1}{8}-\frac{1}{11}+\cdots=\frac{\pi}{3\sqrt{3}}-\frac{\ln 2}{3},\\
	&1-\frac{1}{5}+\frac{1}{9}-\frac{1}{13}+\cdots=\frac{\pi}{4\sqrt{2}}+\frac{\ln(1+\sqrt{2})}{2\sqrt{2}}.
	\end{align*} We can recognize them to have the form $\sum_{n=0}^{\infty} \frac{(-1)^n}{pn+q}$, for integers $p,q>0$. The fact these are definite integrals in disguise is a consequence of: 
	
	\begin{prop}\label{Prop. Sum p} If $\lambda>0$, then $\displaystyle  I(\lambda):=\sum_{n=0}^\infty \frac{(-1)^n}{\lambda n+1}=\int_0^1 \frac{dt}{1+t^\lambda}.$
	\end{prop}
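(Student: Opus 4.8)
The plan is to recognize the integrand as the sum of a geometric series and then to transfer the limit $x\to 1^-$ through Abel's theorem. For $0\le t<1$ one has $\frac{1}{1+t^\lambda}=\sum_{n=0}^\infty (-1)^n t^{\lambda n}$, and for any fixed $x\in(0,1)$ this series converges uniformly on $[0,x]$ by the Weierstrass $M$-test, being dominated there by the convergent series $\sum_{n=0}^\infty x^{\lambda n}$. Integrating term by term on $[0,x]$ then gives
\[
\int_0^x \frac{dt}{1+t^\lambda}=\sum_{n=0}^\infty \frac{(-1)^n x^{\lambda n+1}}{\lambda n+1}=x\sum_{n=0}^\infty \frac{(-1)^n}{\lambda n+1}\,(x^\lambda)^n,
\]
that is, $\int_0^x \frac{dt}{1+t^\lambda}=x\,h(x^\lambda)$ where $h(y):=\sum_{n=0}^\infty \frac{(-1)^n}{\lambda n+1}\,y^n$.

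Next I would note that $h$ is a genuine power series in the variable $y$, with radius of convergence $1$, and that the series $\sum_{n=0}^\infty \frac{(-1)^n}{\lambda n+1}$ defining $I(\lambda)$ converges by Leibniz's test, since $\frac{1}{\lambda n+1}$ decreases monotonically to $0$. Hence Theorem \ref{T Abel} applies to $h$ and gives $\lim_{y\to 1^-} h(y)=I(\lambda)$. This is precisely where the substitution $y=x^\lambda$ earns its keep: for non-integral $\lambda$ the series $\sum_{n=0}^\infty (-1)^n x^{\lambda n+1}/(\lambda n+1)$ is not a power series in $x$, so Abel's theorem cannot be invoked for it directly, whereas after the substitution everything is in the required shape.

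Finally, I would let $x\to 1^-$. On the left, $\int_0^x \frac{dt}{1+t^\lambda}\to \int_0^1 \frac{dt}{1+t^\lambda}$ because the integrand is continuous, hence bounded, on $[0,1]$ (continuity of the integral in its upper limit). On the right, $x\to 1$ and $x^\lambda\to 1^-$ by continuity of $t\mapsto t^\lambda$, so $x\,h(x^\lambda)\to 1\cdot I(\lambda)=I(\lambda)$. Equating the two limits yields the asserted identity. The only points that call for a word of justification are the term-by-term integration (uniform convergence on compact subintervals of $[0,1)$) and the appeal to Leibniz's test that licenses the use of Abel's theorem; neither presents a genuine obstacle, so the difficulty here is conceptual rather than technical — knowing to pass to the power series in $x^\lambda$.
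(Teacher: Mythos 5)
Your proof is correct and follows essentially the same route as the paper's: Leibniz's test for convergence, the substitution $x=u^\lambda$ to turn the sum into a genuine power series to which Abel's theorem applies, and term-by-term integration of the geometric series justified by uniform convergence on compact subintervals of $[0,1)$. You merely run the chain of equalities in the opposite direction (from the integral to the series rather than from the series to the integral), and your explicit remark on why the substitution is needed for non-integral $\lambda$ is a nice touch that the paper leaves implicit.
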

	
	\begin{proof} Convergence follows from Leibniz's test. Now, Theorem \ref{T Abel} shows $I(\lambda)$ is $$\lim_{x\to1^-} \sum_{n=0}^\infty \frac{(-1)^n}{\lambda n+1}x^{n}=\lim_{u\to1^-} \sum_{n=0}^\infty \frac{(-1)^n}{\lambda n+1}u^{\lambda n+1}=\lim_{u\to1^-} \sum_{n=0}^\infty (-1)^n \int_0^u  t^{\lambda n} dt,$$
		where we changed variables $x=u^\lambda$ in the first equality and add an extra $u$. But the geometric series converges
		uniformly in closed intervals of $[0,1)$, thus the interchange of sums and integrals is valid, leading to the result.
	\end{proof}
	
	For rational values $\lambda=p/q>0$, the change of variable $s=t^q$ gives 
	\begin{equation}\label{Eq. 2int I}
	I(p/q)/q=\sum_{n=0}^\infty \frac{(-1)^n}{pn+q}=\frac{1}{q}\int_0^1 \frac{ds}{1+s^{p/q}}=\int_0^1 \frac{t^{q-1}}{1+t^p}dt=\int_0^1 \frac{t^{q-1}-t^{p+q-1}}{1-t^{2p}}dt,
	\end{equation} and finding the sum is reduced to computing (\ref{Eq. 2int I}). Another fact on this
	integral is that $I(0)={1}/{2}$ now makes sense and recovers the Abel sum of Grandi's series.
\end{eje}

\begin{eje}\label{Ex. Diric} The second family of series have the form
	\begin{align*}
	\label{Eq.Ex5}	&1+\frac{1}{2}-\frac{1}{4}-\frac{1}{5}+\frac{1}{7}+\frac{1}{8}-\cdots=\frac{2\pi}{3\sqrt{3}},\\
	\nonumber	1-\frac{1}{2}+\frac{1}{4}-\frac{1}{5}+\frac{1}{7}&-\frac{1}{8}+\cdots=\frac{\pi}{3\sqrt{3}},\, 1+\frac{1}{3}-\frac{1}{5}-\frac{1}{7}+\frac{1}{9}+\frac{1}{11}-\cdots=\frac{\pi}{2\sqrt{2}},\end{align*} and they appeared in Euler's studies on integrals related with trigonometric functions. The latter two are particular cases, for $s=1$, of $L$-functions $L(s,\chi):=\sum_{n=1}^\infty \frac{\chi(n)}{n^s}$, associated to characters $\chi$. These were used by Dirichlet in his famous theorem on the infinitude of primes in arithmetic
	progressions \cite{Apostol,Stein}.  Briefly, a \textit{Dirichlet character} is a function $\chi:\mathbb{Z}\to\mathbb{C}$ which is $p$-periodic for some $p>0$, completely multiplicative ($\,\chi(mn)=\chi(m)\chi(n)$\, for all integers $m,n$), and satisfying  $\chi(n)\neq 0$ if and only if $\text{g.c.d.}(n,p)=1$. In our case, the periods are $p=3$ and $8$, and $\chi$ assumes the
	values $(1,-1,0)$ and $(1,0,1,0,-1,0,-1,0)$, respectively. But,
	the key point here is non-trivial characters $\chi$, i.e., those not being identically $1$, satisfy $$\chi(1)+\chi(2)+\cdots+\chi(p-1)=0.$$ Therefore, the step from series to integrals is a consequence of:
\end{eje}

\begin{prop}\label{Pr. 2} Consider a polynomial $P(x)=a_0+a_1x+\cdots+a_{p-1}x^{p-1}\in\mathbb{C}[x]$ such that $P(1)=a_0+a_1+\cdots+a_{p-1}=0$, and the $p$-periodic sequence $(a_n)_{n\geq 0}$, i.e., $a_{n+p}=a_n$, for all $n\in\mathbb{N}$. Then, the following series converges and \begin{equation}\label{Eq. Prop 2}
	\sum_{n=0}^\infty \frac{a_n}{n+1}=\int_0^1 \frac{P(t)}{1-t^p}dt.
	\end{equation}
\end{prop}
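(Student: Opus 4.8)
The plan is to mirror the argument used for Proposition~\ref{Prop. Sum p}: establish convergence via Dirichlet's test, apply Abel's Theorem~\ref{T Abel} to realize the sum as a boundary limit of a power series, integrate the geometric series term by term, recognize the resulting generating function as $P(t)/(1-t^p)$, and finally carry the limit inside the integral.

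First I would verify convergence. Since $(a_n)_{n\ge 0}$ is $p$-periodic with $a_0+a_1+\cdots+a_{p-1}=P(1)=0$, the partial sums $\sum_{n=0}^{N}a_n$ take only finitely many values and hence are bounded; as $(1/(n+1))$ decreases monotonically to $0$, Dirichlet's test gives convergence of $\sum_{n\ge 0}a_n/(n+1)$. Then Theorem~\ref{T Abel} would let me write
\[
\sum_{n=0}^{\infty}\frac{a_n}{n+1}=\lim_{x\to 1^-}\sum_{n=0}^{\infty}\frac{a_n}{n+1}\,x^{n+1}=\lim_{x\to 1^-}\sum_{n=0}^{\infty}a_n\int_0^x t^n\,dt .
\]
Because $(a_n)$ is bounded, $\sum_{n}a_n t^n$ converges uniformly on $[0,x]$ for each fixed $x\in[0,1)$ (Weierstrass $M$-test), so I may interchange summation and integration, obtaining $\lim_{x\to 1^-}\int_0^x g(t)\,dt$ with $g(t):=\sum_{n=0}^{\infty}a_n t^n$. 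Grouping the (absolutely convergent) series in blocks of length $p$ and using periodicity,
\[
g(t)=\sum_{k=0}^{\infty}t^{kp}\sum_{j=0}^{p-1}a_j t^j=\frac{P(t)}{1-t^p},\qquad |t|<1 .
\]

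The remaining step is to justify that $\lim_{x\to 1^-}\int_0^x \frac{P(t)}{1-t^p}\,dt=\int_0^1\frac{P(t)}{1-t^p}\,dt$, i.e.\ that the integral on the right---improper a priori, since $1-t^p$ vanishes at $t=1$---actually converges. This is exactly where the hypothesis $P(1)=0$ enters: writing $P(t)=P(t)-P(1)=(t-1)Q(t)$ for some $Q\in\mathbb{C}[x]$ and $1-t^p=(1-t)(1+t+\cdots+t^{p-1})$, I get
\[
\frac{P(t)}{1-t^p}=\frac{-Q(t)}{1+t+\cdots+t^{p-1}},
\]
whose right-hand side extends continuously to $t=1$, where the denominator equals $p\ne 0$. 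Hence the integrand is bounded on $[0,1]$, the integral is proper, the limit equals it, and (\ref{Eq. Prop 2}) follows. The main obstacle, and the point I expect a hasty reader to overlook, is precisely this last step: one must observe that $\lim_{x\to 1^-}\int_0^x$ is a genuine convergent integral over $[0,1]$ only because $P(1)=0$ cancels the zero of $1-t^p$ at the endpoint; without that cancellation the right-hand side of (\ref{Eq. Prop 2}) would be meaningless even though the series still converges.
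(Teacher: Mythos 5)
Your argument is correct and follows essentially the same route as the paper: Dirichlet's test for convergence, Abel's theorem to pass to the power series, term-by-term integration justified by uniform convergence on $[0,x]$, regrouping by periodicity to obtain $P(t)/(1-t^p)$, and the cancellation from $P(1)=0$ to make the integrand continuous at $t=1$ so the limit $x\to 1^-$ can be taken. The only cosmetic difference is that you integrate first and then regroup the series into blocks of length $p$, whereas the paper regroups by residue classes modulo $p$ before integrating; your explicit emphasis on why $P(1)=0$ is needed at the endpoint matches the paper's closing remark.
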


\begin{proof} Dirichlet's test shows convergence, as the partial sums $\sum_{k=0}^n a_k$ reduce to $a_0,a_0+a_1,\dots$,  $a_0+a_1+\cdots+a_{p-1}=0$. Then, in view of Theorem \ref{T Abel} \\ $$\sum_{n=0}^\infty \frac{a_n}{n+1}=\lim_{x\to 1^-} \sum_{n=1}^\infty a_n\frac{x^{n+1}}{n+1}=\lim_{x\to 1^-} \sum_{j=0}^{p-1} a_j \sum_{n=0}^\infty \frac{x^{np+j+1}}{np+j+1}.$$ The last equality holds because, for $0<x<1$, this is an absolutely convergent series. Now, using the uniform convergence in closed intervals of $[0,1)$, the sum above is  $$\sum_{j=0}^{p-1} a_j \sum_{n=0}^\infty \int_0^x t^{np+j}dt= \sum_{j=0}^{p-1} a_j \int_0^x \frac{t^{j}}{1-t^p}dt=\int_0^x \frac{P(t)}{1-t^p}dt.$$ Since $P(1)=0$, canceling $t-1$ from the numerator and denominator we find the integrand is continuous at $1$. Thus, we can take $x\to1^-$, leading to the result.
\end{proof}

To unravel the values in Example \ref{Ex. Knopp} (\ref{Eq. 2int I}) and \ref{Ex. Diric} we have to face the integral (\ref{Eq. Prop 2}). The calculation will be postponed to the last section, but we won't make you wait until then! Awaiting for logarithms and expressions involving $\pi$, we present:

\begin{theorem}\label{Main Thm.} If $P(x)=\sum_{l=0}^{p-1} a_l x^l\in \mathbb{C}[x]$ satisfies $P(1)=0$, then $	\int_0^1 \frac{P(t)}{1-t^p}dt=$
	$$\sum_{l=1}^{p-1} \frac{a_{l-1}}{p}\left[\ln(2p)+\frac{\pi}{2}  \cot\left(\!\frac{\pi l}{p}\right)-2\sum_{j=1}^{\lfloor\frac{p+1}{2}\rfloor-1}\!\!  \cos\left(\frac{2\pi  lj}{p}\right)\ln\left(\!\sin\left(\frac{\pi j}{p}\right)\!\right)\!\right],$$ where $\lfloor\cdot \rfloor$ denotes the floor function.
\end{theorem}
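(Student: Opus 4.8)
The plan is to reduce the whole integral to the single definite integral (\ref{Eq. Lema2}) by a partial fraction decomposition over the $p$-th roots of unity. Put $\omega=e^{2\pi i/p}$. Since $1-t^p=\prod_{k=0}^{p-1}(1-\omega^k t)$ and $\deg P\le p-1$, a residue computation — using $\frac{d}{dt}(1-t^p)=-pt^{p-1}$ and $\omega^{-kp}=1$ — gives
\begin{equation*}
\frac{P(t)}{1-t^p}=\frac1p\sum_{k=0}^{p-1}\frac{P(\omega^{-k})}{1-\omega^k t}=-\frac1p\sum_{k=1}^{p-1}\frac{\omega^{-k}P(\omega^{-k})}{t-\omega^{-k}},
\end{equation*}
where the $k=0$ term drops out precisely because $P(1)=0$. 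This is a \emph{finite} sum, and each fraction $1/(t-\omega^{-k})$ with $1\le k\le p-1$ is continuous on $[0,1]$ (its pole is never there), so one may integrate term by term and quote (\ref{Eq. Lema2}).

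Next I would make the value of $\int_0^1 dt/(t-\omega^{-k})$ explicit and uniform in $k$. Writing $\omega^{-k}=e^{i\theta_k}$ with $\theta_k\in(-\pi,\pi]$, one checks $\sin(|\theta_k|/2)=\sin(\pi k/p)$ and, via (\ref{Eq. Arg}), $\arg(1-e^{-i\theta_k})=\pi k/p-\pi/2$ in both ranges $0<\theta_k<\pi$ and $-\pi<\theta_k<0$; hence (\ref{Eq. Lema2}) collapses to
\begin{equation*}
\int_0^1\frac{dt}{t-\omega^{-k}}=\ln 2+\ln\sin\frac{\pi k}{p}+i\Bigl(\frac{\pi k}{p}-\frac{\pi}{2}\Bigr),\qquad k=1,\dots,p-1,
\end{equation*}
the borderline case $\omega^{-k}=-1$ (when $p$ is even) being consistent since both sides are then $\ln 2$. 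Then expand $P(\omega^{-k})=\sum_{l=0}^{p-1}a_l\omega^{-kl}$, so that $\omega^{-k}P(\omega^{-k})=\sum_{l=0}^{p-1}a_l\omega^{-k(l+1)}$; splitting off $l=p-1$ (where $\omega^{-kp}=1$) and using $a_{p-1}=-\sum_{l=0}^{p-2}a_l$ — again $P(1)=0$ — yields $\omega^{-k}P(\omega^{-k})=\sum_{l=1}^{p-1}a_{l-1}(\omega^{-kl}-1)$. Substituting and interchanging the two finite sums, the theorem reduces to verifying, for each fixed $l\in\{1,\dots,p-1\}$, the scalar identity
\begin{multline*}
\sum_{k=1}^{p-1}(\omega^{-kl}-1)\Bigl[\ln 2+\ln\sin\tfrac{\pi k}{p}+i\bigl(\tfrac{\pi k}{p}-\tfrac{\pi}{2}\bigr)\Bigr]\\
=-\ln(2p)-\frac{\pi}{2}\cot\frac{\pi l}{p}+2\!\!\sum_{j=1}^{\lfloor\frac{p+1}{2}\rfloor-1}\!\!\cos\tfrac{2\pi lj}{p}\ln\sin\tfrac{\pi j}{p},
\end{multline*}
since then multiplying by $-1/p$ produces exactly the $l$-th bracket in the statement.

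Finally, this identity falls into three elementary sums. Using $\sum_{k=1}^{p-1}\omega^{-kl}=-1$ and the classical product $\prod_{k=1}^{p-1}\sin(\pi k/p)=p/2^{p-1}$ — itself coming from $\prod_{k=1}^{p-1}(1-\omega^k)=p$ and $|1-\omega^k|=2\sin(\pi k/p)$ by (\ref{Eq. mod 1-ei0}) — the $\ln 2$ term together with the $k$-independent part of the $\ln\sin$ term collapses to $-\ln(2p)$. The leftover $\sum_{k=1}^{p-1}\omega^{-kl}\ln\sin(\pi k/p)$ is real, because its imaginary summand $-\sin(2\pi kl/p)\ln\sin(\pi k/p)$ changes sign under $k\mapsto p-k$; folding those same pairs turns it into $2\sum_{j=1}^{\lfloor(p+1)/2\rfloor-1}\cos(2\pi lj/p)\ln\sin(\pi j/p)$, the floor appearing from the parity of $p$ (for $p$ even the self-paired index $k=p/2$ contributes $\ln\sin(\pi/2)=0$). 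For the imaginary part, $\sum_{k=1}^{p-1}(\tfrac{\pi k}{p}-\tfrac{\pi}{2})=0$, while $\sum_{k=1}^{p-1}k\,z^k=p/(z-1)$ for any $p$-th root of unity $z\neq 1$ (differentiate $\sum_{k=0}^{p-1}z^k$); with $z=\omega^{-l}$ and $1/(\omega^{-l}-1)=\tfrac{i}{2}\cot(\pi l/p)-\tfrac12$ this gives $i\sum_{k=1}^{p-1}(\omega^{-kl}-1)(\tfrac{\pi k}{p}-\tfrac{\pi}{2})=-\tfrac{\pi}{2}\cot(\pi l/p)$. Adding the three contributions finishes the proof. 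The hard part is not conceptual but organizational: keeping the $k\mapsto p-k$ symmetry, the two parity cases hidden in the floor, and the sign from $1-\omega^k t=-\omega^k(t-\omega^{-k})$ straight all at once.
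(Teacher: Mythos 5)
Your proof is correct, and while it rests on the same two pillars as the paper's argument --- the partial fraction decomposition of $P(t)/(1-t^p)$ over roots of unity and the integral (\ref{Eq. Lema2}) --- it is organized along a genuinely different route. The paper first reduces to real coefficients, factors $1-t$ out of $P$, decomposes over the roots of $1+t+\cdots+t^{p-1}$ paired with their conjugates, and must treat $p=2m$ and $p=2m+1$ separately (the odd case via the substitution $t=s^2$). You instead keep complex coefficients, decompose over all $p$-th roots of unity at once (using $P(1)=0$ to kill the $k=0$ pole), and --- the key organizational move --- use $P(1)=0$ a second time to write $\omega^{-k}P(\omega^{-k})=\sum_{l=1}^{p-1}a_{l-1}(\omega^{-kl}-1)$ and swap the order of summation, reducing the theorem to one scalar identity per $l$. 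That identity then splits into three elementary sums, namely $\sum_{k=1}^{p-1}\omega^{-kl}=-1$, the sine product $\prod_{k=1}^{p-1}\sin(\pi k/p)=p/2^{p-1}$, and $\sum_{k=1}^{p-1}kz^k=p/(z-1)$; these play the roles of the paper's Steps 3--4 but without any parity case analysis, since the floor $\lfloor(p+1)/2\rfloor-1$ emerges in both parities from the single folding $k\mapsto p-k$, the self-paired index $k=p/2$ contributing $\ln\sin(\pi/2)=0$. I checked the three component sums, the cancellation $-p\ln 2-\ln p+(p-1)\ln 2=-\ln(2p)$, the evaluation $1/(\omega^{-l}-1)=\tfrac{i}{2}\cot(\pi l/p)-\tfrac12$, and the uniform formula for $\int_0^1 dt/(t-\omega^{-k})$ including the borderline case $\omega^{-k}=-1$; all are correct. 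What your route buys is uniformity --- one computation valid for all $p$ and for complex coefficients throughout, with no analogue of the paper's Step 5 --- while the paper's conjugate-pairing keeps every intermediate quantity real and closer to the geometry of Figure \ref{fig:1}.
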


For now, let's be content with some less intimidating consequences of it. We challenge the reader to follow the hints and complete the next proof.

\begin{coro}\label{Coro. Ipq} Let $0<q<p$ be positive integers. Then, $I(p/q)$ is equal to \begin{equation}\label{Eq. I(p/q)}
	\frac{q\pi/p}{2\sin(q\pi/p)}-\frac{2q}{p}\sum_{j=0}^{\lfloor p/2\rfloor-1}\cos\left((2j+1)\frac{q\pi}{p}\right)\ln\left(\sin\left((2j+1)\frac{\pi}{2p}\right)\right).
	\end{equation} Furthermore, \begin{align}
	\label{Eq. int sin} &\int_0^1 \frac{t^{q-1}-t^{p-q-1}}{1-t^p}dt=\frac{\pi}{p} \cot(q\pi/p),\\
	\label{Eq. int sin 2} &\int_0^1 \frac{t^{q-1}+t^{p-q-1}}{1+t^p}dt=\int_0^{+\infty} \frac{t^{q-1}}{1+t^p}dt=\frac{\pi/p}{\sin(q\pi/p)}.
	\end{align} 
\end{coro}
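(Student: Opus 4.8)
The plan is to obtain all three formulas from Theorem \ref{Main Thm.}, choosing in each case a suitable polynomial $P$ and period, and then simplifying with elementary trigonometry. I would begin with (\ref{Eq. int sin}): apply Theorem \ref{Main Thm.} with the given $p$ to $P(t)=t^{q-1}-t^{p-q-1}$, which satisfies $P(1)=0$ and $\deg P\leq p-2$; only $a_{q-1}=1$ and $a_{p-q-1}=-1$ are nonzero, so the right-hand side reduces to its $l=q$ and $l=p-q$ terms. The constants $\ln(2p)$ cancel; since $\cot(\pi-\theta)=-\cot\theta$, the cotangents give $\frac{1}{p}\cdot\frac{\pi}{2}\left[\cot(\pi q/p)-\cot(\pi-\pi q/p)\right]=\frac{\pi}{p}\cot(q\pi/p)$; and the cosine--log sums cancel termwise because $\cos\!\left(\frac{2\pi(p-q)j}{p}\right)=\cos\!\left(2\pi j-\frac{2\pi qj}{p}\right)=\cos\!\left(\frac{2\pi qj}{p}\right)$. (When $p$ is even and $q=p/2$ the polynomial vanishes identically, but so does $\cot(\pi/2)$, so the identity holds trivially there.)

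For (\ref{Eq. I(p/q)}) and the value in (\ref{Eq. int sin 2}) the device is to write $\frac{1}{1+t^p}=\frac{1-t^p}{1-t^{2p}}$ and invoke Theorem \ref{Main Thm.} with period $2p$. By (\ref{Eq. 2int I}) we have $I(p/q)=q\int_0^1\frac{t^{q-1}-t^{p+q-1}}{1-t^{2p}}\,dt$, so take $P(t)=q\left(t^{q-1}-t^{p+q-1}\right)$; its nonzero coefficients sit at $l=q$ and $l=p+q$, and $\lfloor(2p+1)/2\rfloor-1=p-1$, so the inner sums run over $1\leq j\leq p-1$ with $\cos(2\pi lj/(2p))=\cos(\pi lj/p)$ and logarithm $\ln\sin(\pi j/(2p))$. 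The $\ln(4p)$ terms cancel; using $\cot\theta+\tan\theta=\frac{2}{\sin 2\theta}$ with $\theta=\pi q/(2p)$, the cotangent part of these two terms is
\[
\frac{q}{2p}\cdot\frac{\pi}{2}\left[\cot\!\left(\frac{\pi q}{2p}\right)-\cot\!\left(\frac{\pi}{2}+\frac{\pi q}{2p}\right)\right]=\frac{q}{2p}\cdot\frac{\pi}{2}\cdot\frac{2}{\sin(q\pi/p)}=\frac{q\pi/p}{2\sin(q\pi/p)},
\]
the leading term of (\ref{Eq. I(p/q)}). In the logarithmic part the $l=p+q$ term carries the factor $\cos(\pi j+\pi qj/p)=(-1)^j\cos(\pi qj/p)$, so the even-$j$ terms cancel against the $l=q$ ones while the odd-$j$ terms double; re-indexing the odd $j$ as $j=2m+1$ with $0\leq m\leq\lfloor p/2\rfloor-1$ produces exactly the sum in (\ref{Eq. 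I(p/q)}).

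For (\ref{Eq. int sin 2}), the change of variable $t\mapsto 1/t$ turns $\int_1^{+\infty}\frac{t^{q-1}}{1+t^p}\,dt$ into $\int_0^1\frac{t^{p-q-1}}{1+t^p}\,dt$, giving the first equality (the integral over $[1,\infty)$ converges since $q<p$). For the common value, write $\frac{t^{q-1}+t^{p-q-1}}{1+t^p}=\frac{t^{q-1}+t^{p-q-1}-t^{p+q-1}-t^{2p-q-1}}{1-t^{2p}}$ and apply Theorem \ref{Main Thm.} with period $2p$ to $P(t)=t^{q-1}+t^{p-q-1}-t^{p+q-1}-t^{2p-q-1}$, whose coefficients sum to $0$; the relevant indices are $l\in\{q,p-q,p+q,2p-q\}$. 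The $\ln(4p)$ contributions cancel, the identities $\cot(\frac{\pi}{2}-\alpha)=\tan\alpha$, $\cot(\frac{\pi}{2}+\alpha)=-\tan\alpha$, $\cot(\pi-\alpha)=-\cot\alpha$ (with $\alpha=\pi q/(2p)$) collapse the cotangent bracket to $2(\cot\alpha+\tan\alpha)=4/\sin(q\pi/p)$, yielding $\pi/(p\sin(q\pi/p))$, and the cosine bracket equals $\cos x+(-1)^j\cos x-(-1)^j\cos x-\cos x=0$ with $x=\pi qj/p$, so no logarithms survive.

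The main obstacle is not conceptual but organizational: spotting the substitution $\frac{1}{1+t^p}=\frac{1-t^p}{1-t^{2p}}$ together with the accompanying doubling of the period, and then keeping straight the angle reductions for the cotangents and cosines and the re-indexing of the odd values of $j$. Everything analytic --- convergence of the series, continuity of the integrands at $t=1$ after cancelling the factor $1-t$, and the validity of the substitutions --- is already guaranteed by Propositions \ref{Prop. Sum p} and \ref{Pr. 2} and the arguments in their proofs.
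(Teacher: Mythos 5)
Your proposal is correct and follows essentially the same route as the paper: Theorem \ref{Main Thm.} applied to $t^{q-1}-t^{p+q-1}$ with period $2p$ for (\ref{Eq. I(p/q)}), the identity $\cot x+\tan x = 2/\sin(2x)$ for the cotangent terms, and the substitution $t=1/s$ for the improper integral in (\ref{Eq. int sin 2}). The only cosmetic difference is that for the common value in (\ref{Eq. int sin 2}) the paper's hint is to add two instances of the already-established (\ref{Eq. I(p/q)}) (for $q$ and $p-q$) rather than reapplying Theorem \ref{Main Thm.} to a four-term polynomial of period $2p$; both reduce to the same trigonometric cancellations.
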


\begin{proof} For (\ref{Eq. I(p/q)}) use Theorem \ref{Main Thm.} in (\ref{Eq. 2int I}) for  $2p$ and $t^{q-1}-t^{p+q-1}$. You may recall  that  $\cot(x)-\cot(\pi/2+x)=\cot(x)+\tan(x)=2/\sin(2x)$.  For (\ref{Eq. int sin 2}) note that $\int_0^{+\infty} \frac{t^{q-1}}{1+t^p}dt=\int_0^1  \frac{t^{q-1}}{1+t^p}dt+\int_1^{+\infty}  \frac{t^{q-1}}{1+t^p}dt$, and $\int_1^{+\infty} \frac{t^{q-1}}{1+t^p}dt=\int_0^1 \frac{s^{p-q-1}}{1+s^p}ds$ under the change of variables $t=1/s$. For the remaining apply again Theorem \ref{Main Thm.} and (\ref{Eq. I(p/q)}).
\end{proof}

Equations (\ref{Eq. int sin}) and (\ref{Eq. int sin 2}) seem innocent, but Euler used them to find the partial fraction decomposition of the reciprocal of sine and the cotangent function! \cite[p. 94-96]{Varadarajan}.

\section{The way ahead, delving into special functions}\label{Sec. Digamma}

It is time for our last worked example related to Gauss' work on the \textit{digamma function} \begin{equation}\label{Eq. Def psi}
\psi(z):=-\gamma+\sum_{k=0}^\infty \frac{1}{k+1}-\frac{1}{z+k},
\end{equation} where $\gamma$ is as in (\ref{Eq. EulerMascheroni}). Theorem \ref{T Abel} is back to prove an integral representation for $\psi$, which will show how this special function appeared hidden in Examples \ref{Ex. Knopp} and \ref{Ex. Diric}. All we need to know here is collected in:

\begin{theorem}[Gauss's Digamma Theorem] \label{Thm Gauss}  The map $\psi:\mathbb{C}\setminus\{0,-1,-2,\dots\}\to\mathbb{C}$ is well-defined and it can be written as 
	\begin{equation}\label{Eq. int psi} \psi(z)=-\gamma+\int_0^1 \frac{1-t^{z-1}}{1-t} dt,\qquad \text{ for }\, \textup{Re}(z)>0.
	\end{equation} Furthermore, for integers $0<q<p$, it assumes the value
	\begin{align}
	\label{Eq. psi pq int}\psi(q/p)+\gamma=-\ln(2p)-\frac{\pi}{2}\cot\left(\frac{\pi q}{p}\right)+2\sum_{j=1}^{\lfloor\frac{p+1}{2}\rfloor-1} \cos\left(\frac{2\pi jq}{p}\right)\ln\left(\sin\left(\frac{\pi j}{p}\right)\!\right). 
	\end{align}\end{theorem}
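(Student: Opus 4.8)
The plan is to prove the three claims of Theorem \ref{Thm Gauss} in sequence: convergence of the series defining $\psi$, the integral formula (\ref{Eq. int psi}) for $\textup{Re}(z)>0$, and then the explicit value (\ref{Eq. psi pq int}), which I will obtain by setting $z=q/p$, changing variables, and invoking Theorem \ref{Main Thm.}. For well-definedness, combining the two fractions in (\ref{Eq. Def psi}) gives $\frac{1}{k+1}-\frac{1}{z+k}=\frac{z-1}{(k+1)(z+k)}$, whose modulus is $O(1/k^2)$ for any fixed $z\notin\{0,-1,-2,\dots\}$ --- exactly the points at which a denominator vanishes. Hence the series converges absolutely and locally uniformly on $\mathbb{C}\setminus\{0,-1,-2,\dots\}$, so $\psi$ is well defined (indeed holomorphic) there.

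For (\ref{Eq. int psi}) I would use the elementary identities $\frac{1}{k+1}=\int_0^1 t^k\,dt$ and $\frac{1}{z+k}=\int_0^1 t^{z+k-1}\,dt$, valid for $\textup{Re}(z)>0$, to write $\frac{1}{k+1}-\frac{1}{z+k}=\int_0^1 t^k(1-t^{z-1})\,dt$. Summing over $0\le k\le N$ and using $\sum_{k=0}^N t^k=\frac{1-t^{N+1}}{1-t}$, the $N$-th partial sum of the series equals $\int_0^1\frac{(1-t^{N+1})(1-t^{z-1})}{1-t}\,dt$. Since $\frac{1-t^{z-1}}{1-t}$ extends continuously to $t=1$ (with value $z-1$) and is bounded near $t=0$ by a constant multiple of $1+t^{\textup{Re}(z)-1}$, which is integrable on $[0,1]$, while $0\le 1-t^{N+1}\le 1$, dominated convergence lets me send $N\to\infty$ and conclude $\psi(z)+\gamma=\int_0^1\frac{1-t^{z-1}}{1-t}\,dt$. (Equivalently, the remainder $\int_0^1\frac{t^{N+1}(1-t^{z-1})}{1-t}\,dt$ expands, via $\frac{1}{1-t}=\sum_{j\ge 0}t^j$, to $\sum_{k\ge N+1}\left(\frac{1}{k+1}-\frac{1}{z+k}\right)$, a tail of the convergent series, hence $\to 0$.)

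Finally, for (\ref{Eq. psi pq int}) with integers $0<q<p$, I would put $z=q/p$ in (\ref{Eq. int psi}) and substitute $t=s^p$, $dt=p\,s^{p-1}\,ds$; since $t^{q/p-1}=s^{q-p}$ and $1-t=1-s^p$, this gives
$$\psi(q/p)+\gamma=p\int_0^1\frac{s^{p-1}-s^{q-1}}{1-s^p}\,ds=-p\int_0^1\frac{P(s)}{1-s^p}\,ds,\qquad P(s):=s^{q-1}-s^{p-1}.$$
Here $P$ has degree $p-1$ and $P(1)=0$, so Theorem \ref{Main Thm.} applies with $a_{q-1}=1$, $a_{p-1}=-1$ and all other coefficients zero; among $a_0,\dots,a_{p-2}$ only $a_{q-1}$ is nonzero, so the sum $\sum_{l=1}^{p-1}\frac{a_{l-1}}{p}[\cdots]$ there collapses to its single $l=q$ term. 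That yields $\int_0^1\frac{P(s)}{1-s^p}\,ds=\frac{1}{p}\big[\ln(2p)+\frac{\pi}{2}\cot(\pi q/p)-2\sum_{j=1}^{\lfloor(p+1)/2\rfloor-1}\cos(2\pi qj/p)\ln(\sin(\pi j/p))\big]$, and multiplying by $-p$ reproduces (\ref{Eq. psi pq int}) exactly.

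The step I expect to be most delicate is the interchange of the infinite sum and the integral in the second paragraph: one has to control the behaviour near $t=0$ when $0<\textup{Re}(z)<1$, where $t^{z-1}$ is unbounded but still integrable. The last paragraph is essentially bookkeeping, the one point worth noticing being that the leading coefficient $a_{p-1}$ never enters the sum displayed in Theorem \ref{Main Thm.}, so only $a_{q-1}=1$ contributes when $P(s)=s^{q-1}-s^{p-1}$.
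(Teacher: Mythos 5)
Your proposal is correct, and its first and third parts coincide with the paper's proof: the $O(1/k^2)$ bound for well-definedness is the same, and the derivation of (\ref{Eq. psi pq int}) via $t=s^p$ and Theorem \ref{Main Thm.} applied to $s^{q-1}-s^{p-1}$ (with only the $l=q$ term surviving, and the sign flip from $a_{q-1}$ sitting with a minus) is exactly the paper's computation. Where you genuinely diverge is the integral representation (\ref{Eq. int psi}). You work with the finite partial sums, writing $\sum_{k=0}^N\bigl(\tfrac{1}{k+1}-\tfrac{1}{z+k}\bigr)=\int_0^1\frac{(1-t^{N+1})(1-t^{z-1})}{1-t}\,dt$ and passing to the limit by dominated convergence; this is clean and correct (your domination near $t=0$ by a multiple of $1+t^{\mathrm{Re}(z)-1}$ handles the only delicate point). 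The paper instead integrates over $[0,s]$ with $s<1$, expands the geometric series there, rewrites the result as $\sum_{k\ge 0}\bigl(\tfrac{1}{k+1}-\tfrac{1}{z+k}\bigr)s^{k+1}+(s-s^z)\sum_{k\ge 0}\tfrac{s^k}{z+k}$, applies Abel's Theorem \ref{T Abel} to the first sum as $s\to 1^-$, and shows the cross term tends to $0$ via the bound $\sum_k s^k/|z+k|<1/|z|-\ln(1-s)$. The trade-off: your route is arguably shorter and avoids the cross-term estimate, but it replaces the paper's deliberate use of Abel's continuity theorem — the showcased tool of the whole article — with a dominated-convergence argument; the paper's version buys a further illustration of Theorem \ref{T Abel} at the cost of the extra splitting and the estimate $(s-s^z)\ln(1-s)\to 0$.
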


\begin{proof} To start with, we check that (\ref{Eq. Def psi}) converges absolutely for  any $z\neq 0, -1,-2,\dots$: fixing an integer $N>0$, if $|z|<N<k/2$, then $|z+k|\geq k-|z|>k-N>k/2$ and  $$\left|\frac{1}{k+1}-\frac{1}{z+k}\right|=\left|\frac{z-1}{(k+1)(z+k)}\right|<2(N+1)/k^2,$$ which is the general term of a convergent series independent of $z$. To prove (\ref{Eq. int psi}) fix $0<s<1$. Near $t=0$, the integrand behaves as $1-t^{z-1}$ and $\lim_{\epsilon\to 0^+}z\int_\epsilon^s t^{z-1}dt=\lim_{\epsilon\to 0^+}s^z-\epsilon^z$ exists only for $\text{Re}(z)>0$, justifying this constraint. Now, \begin{align*}
	\int_0^s \frac{1-t^{z-1}}{1-t}dt&=\int_0^s \sum_{k=0}^\infty t^k-t^{z-1+k}ds=\sum_{k=0}^\infty \frac{s^{k+1}}{k+1}-\frac{s^{z+k}}{z+k}\\&=\sum_{k=0}^\infty \left(\frac{1}{k+1}-\frac{1}{z+k}\right)s^{k+1}+(s-s^z)\sum_{k=0}^\infty\frac{s^{k}}{z+k}.
	\end{align*} The last equality is used to apply Theorem \ref{T Abel}, as it shows the first sum goes to $\psi(x)+\gamma$ as $s\to 1^-$. Now, we show the second term goes to $0$: as $|z+k|\geq \text{Re}(z+k)=\text{Re}(z)+k>k$ we can bound the second sum by  
	$$\sum_{k=0}^\infty\frac{s^{k}}{|z+k|}=\frac{1}{|z|}+\sum_{k=1}^\infty\frac{s^{k}}{|z+k|}< \frac{1}{|z|}+\sum_{k=1}^\infty\frac{s^{k}}{k}=\frac{1}{|z|}-\ln(1-s).$$ But $ (s-s^z)\ln(1-s)=s\frac{1-s^{z-1}}{1-s}(1-s)\ln(1-s)\to 0$, as $(1-s)\ln(1-s)\to 0$ and ${(1-s^{z-1})}/{(1-s)}\to\left.{d(s^{z-1})}/{ds}\right|_{s=1} =z-1$ when $s\to 1^-$, which is enough to conclude. Finally, (\ref{Eq. psi pq int}) is a direct consequence of Theorem \ref{Main Thm.} since \begin{equation*}\label{Eq. psi pq}
	\psi(q/p)+\gamma=\int_0^1 \frac{1-s^{q/p-1}}{1-s}ds=p\int_0^1 \frac{t^{p-1}-t^{q-1}}{1-t^p}dt.
	\end{equation*}
\end{proof}

\begin{remark}\label{Nota Gauss}
	Equation (\ref{Eq. psi pq int}) is entry (75) in Gauss' 1813 paper \cite{Gauss}, 13 years before Abel wrote his result. Several proofs are available in the literature. The above, including Theorem \ref{Main Thm.}, is close to \cite[p. 35-38]{Campbell} although Theorem \ref{T Abel} is not even mentioned there, in contrast with Jensen's \cite{Jensen}\cite[Thm. 1.2.7]{Andrews},\cite[p. 95, p. 498]{Knuth}. Another alternative is Lehmer's proof \cite{Lehmer} who used a generalization of Euler-Mascheroni constant for arithmetical progressions and finite Fourier series, see also Appendix B in \cite{Blagouchine}. Finally, we must mention $\psi(q/p)+\gamma$ is in fact transcendental! \cite{Murty}.
\end{remark}

With this said, we have concluded our examples, hoping to have drawn the reader's attention on how Abel's theorem finds its home in the theory of special functions. This point can be strengthened by mentioning another result in the realm of hypergeometric functions: Gauss' value for (\ref{Eq. Hyper}) at $z=1$ \cite{Gauss}, \cite[Thm. 2.2.2]{Andrews}: $${}_{2}F_{1}(a,b;c;1)=1+\sum_{n=1}^\infty \frac{(a)_n(b)_n}{(c)_n n!}={\frac {\Gamma (c)\Gamma (c-a-b)}{\Gamma (c-a)\Gamma (c-b)}},\quad \text{if Re} (c)>\text{Re} (a+b),$$ see also Kummer's formula \cite[Coro. 3.1.2]{Andrews}. Here $\Gamma$ denotes the Gamma function, usually taught before $\psi$, and both related by $\psi(z)=\Gamma'(z)/\Gamma(z)$ \cite{Jensen}.

Now, let's reap the harvest! In view of Theorems \ref{Main Thm.} and \ref{Thm Gauss} we find \vspace{-0.1cm}$$\int_0^1 \frac{P(t)}{1-t^p}dt=-\sum_{l=1}^{p-1} \frac{a_{l-1}}{p} (\gamma+\psi\left(l/p\right))=\frac{\gamma }{p}a_{p-1}-\sum_{l=1}^{p-1} \frac{a_{l-1}}{p} \psi\left(l/p\right).$$ This is particularly interesting if  $P(t)=\chi(1)+\chi(2)t+\cdots+\chi(p-1)t^{p-2}$, where $\chi$ is a non-trivial $p$-periodic Dirichlet character, as it proves the formula \begin{equation}\label{Eq. L1 psi}
L(1,\chi)=-\frac{1}{p} \sum_{j=1}^{p-1} \chi(j) \psi(j/p),
\end{equation} familiar in Number Theory \cite{Japan}. A second consequence, valid if $q$ doesn't divide $k$, is $$\sum_{j=1}^{q-1} \psi(j/q) e^{2\pi i jk/q}=\gamma+q\ln\left(2\sin(\pi k/q)\right)+i(2k-q){\pi}/{2},$$ due to Gauss. In this case,  $P(t)=\omega_q^{k}(1+\omega_q^{k} t+\omega_q^{2k}t^2+\cdots+\omega_q^{(q-1)k}t^{q-1})=\omega_q^{k} \frac{1-t^q}{1-\omega_q^k t}$, where $\omega_q=e^{2\pi i/q}$, joint with (\ref{Eq. Lema2}), do the trick.

On the other hand, the function $I(\lambda)$ that we met in Example \ref{Ex. Knopp} can easily be studied by means of $\psi$. In fact, it can be extended to $\mathbb{C}\setminus\{0,-1,-1/2,\dots\}$ as \begin{equation}\label{Eq. I int}
2\lambda I(\lambda)=\psi\left({1}/{2}+{1}/{2\lambda}\right)-\psi\left({1}/{2\lambda}\right).
\end{equation} We can prove this, by means of series or exploiting the integral representations. Using the latter, if we change variables $s=t^{2\lambda}$, $\lambda>0$ in $I(\lambda)$, we find  $2\lambda I(\lambda)$ is equal to $$\int_0^1 \frac{s^{\frac{1}{2\lambda}-1}}{1+\sqrt{s}}ds=\int_0^1 \frac{s^{\frac{1}{2\lambda}-1}(1-\sqrt{s})}{1-s}ds=\int_0^1 \frac{(1-s^{\frac{1}{2\lambda}-\frac{1}{2}})-(1-s^{\frac{1}{2\lambda}-1})}{1-s}ds,$$ as required. The singularities of $I(\lambda)$ are then located at $\lambda=0$ and $\lambda=-1/k$, for integers $k\geq 1$ coming from the singularities of $\psi$ in (\ref{Eq. I int}).

A last word on the values of $\psi$. Theorem \ref{Thm Gauss} gives them at rational numbers in the interval $(0,1)$. For the remaining, we can use the functional equation \begin{equation}\label{Eq. Feq psi}
\psi(z+1)-\psi(z)={1}/{z},
\end{equation} which follows from (\ref{Eq. Def psi}) using a telescoping series. Examples are $\psi(1)=-\gamma$ and $\psi(m)=-\gamma+1+\frac{1}{2}+\cdots+\frac{1}{m-1}$, for integers $m\geq 2$. In the same way, Corollary \ref{Coro. Ipq} gives the vales of $I$ for rational numbers greater than $1$, and (\ref{Eq. I int}) recovers them all. But we can also use the functional equation $
(1-\lambda)I(\lambda)+I\left(\frac{\lambda}{1-\lambda}\right)=1$. We trust the reader to check it, using the integral representation or equations (\ref{Eq. I int}) and (\ref{Eq. Feq psi}).

\section{Theorem \ref{Main Thm.} and some calculations}

The idea of this proof goes back to Euler! \cite[p. 96]{Varadarajan}. It is simple, up to some trigonometric identities spinning around the different proofs of Gauss' digamma theorem. 

\begin{proof}[Sketch of proof] This is a guided exercise for the reader to fill in the gaps! and the goal is to show the definite integral is equal to the equivalent form \begin{equation}\label{Eq. Main aux}
	-a_{p-1}\frac{\ln(2p)}{p}+\sum_{l=1}^{p-1} \frac{a_{l-1}}{p}\left[\frac{\pi}{2} \cot\left(\!\frac{\pi l}{p}\right)-2\sum_{j=1}^{\lfloor\frac{p+1}{2}\rfloor-1}\!\!  \cos\left(\frac{2\pi lj}{p}\right)\ln\left(\!\sin\left(\frac{\pi j}{p}\right)\!\right)\!\right].
	\end{equation} We fix $\omega_p:=e^{2\pi i/p}$. To start, it is enough to assume $P$ has real coefficients (why?). We first take care of the case $p=2m$ from which the case for $p=2m+1$ will follow.	
	
	\
	
	\textit{Step 1: Partial fractions.} In general, if $R$ and $Q$ are polynomials, $\text{deg}(R)<\deg(Q)=r$, and $Q$ has distinct roots $\lambda_1,\dots,\lambda_r$, then  \begin{equation}\label{Lema PQ} 
	\frac{R(t)}{Q(t)}=\sum_{j=1}^r \frac{R(\lambda_j)}{Q'(\lambda_j)}\frac{1}{t-\lambda_j}.
	\end{equation} We can use this with $R(t)=P(t)/(1-t)$ and $Q(t)=1+t+\cdots+t^{p-1}$  (why?). Now, check $Q$ has roots $-1,\omega_p^{\pm 1},\dots, \omega_p^{\pm(m-1)}$, and $\frac{P(\omega_p^j)}{(1-\omega_p^j)Q'(\omega_p^j)}=-\omega_p^{j}P(\omega_p^{j})/p.$

	\textit{Step 2: Integrating.}  First note,   $\overline{e^{i\theta}P(e^{i\theta})\int_0^1 \frac{dt}{t-e^{i\theta}}}=e^{-i\theta}P(e^{-i\theta})\int_0^1 \frac{dt}{t-e^{-i\theta}}$, where the bar denotes complex conjugate. Use this, joint with (\ref{Lema PQ}) and (\ref{Eq. Lema2}) to check that \begin{align}
	\label{Eq. Proof}\int_0^1&\frac{P(t)}{1-t^p}dt=	\frac{\ln(2)}{p}\left[P(-1)-2\sum_{j=1}^{m-1} \text{Re}(\omega_p^{j}P(\omega_p^{j}))\right]\\
	\nonumber &-\frac{2}{p}\sum_{j=1}^{m-1} \text{Re}(\omega_p^{j}P(\omega_p^{j}))\ln\left(\sin\left(\frac{\pi j}{p}\right)\!\right)+\frac{\pi}{p^2}\sum_{j=1}^{m-1} \text{Im}(\omega_p^{j}P(\omega_p^{j}))(p-2j).
	\end{align}
	
	\textit{Step 3: Simplifying I.} Writing $e^{i\theta} P(e^{i\theta})=\sum_{l=1}^{p} a_{l-1} (\cos(l\theta)+i\sin(l\theta))$, check that the term multiplying $\ln(2)/p$ above is equal to\begin{align*}
	\sum_{l=1}^{p} a_{l-1} \left[(-1)^{l-1}-\sum_{j=1}^{m-1} 2\cos(2\pi jl/p)\right]=-pa_{p-1}.
	\end{align*} For this, consider when $l$ is even or odd, and use the real part of equation (\ref{Eq. sum eit}), namely, \begin{equation}\label{Eq. sum cos} \sum_{j=1}^k \cos(j\theta)=\frac{\sin\left(k\theta/2\right)}{\sin\left(\theta/2\right)}\cos\left((k+1)\frac{\theta}{2}\right).
	\end{equation}  Finally, to deal with the last sum in (\ref{Eq. Proof}), if $\theta_0={2\pi l}/{p}={\pi l}/{m}$, then \begin{align*}
	2\sum_{j=1}^{m-1} (m-j)\sin\left(\theta_0 j\right)=2\sum_{j=1}^{m-1} j\sin\left(\theta_0(m-j)\right)=2(-1)^{l+1}\sum_{j=1}^{m-1} j\sin\left(\theta_0j\right),\end{align*} since $\sin(m\theta_0)=0$ and $\cos(m\theta_0)=(-1)^{l}$. Prove this sum is equal to $m\cot\left(\theta_0/2\right)$ if $l<p$, and $0$ if $l=p$. For instance, take the derivative w.r.t. $\theta$ in (\ref{Eq. sum eit}), consider the imaginary part of the result, and run the computations for $l$ even or odd.
	
	\textit{Step 4: Simplifying II.} Collecting the calculations so far, (\ref{Eq. Proof}) should have reduced to (\ref{Eq. Main aux}), except for the term multiplying $-a_{p-1}$ which is $
	\ln(2)-\frac{2}{p}\sum_{j=1}^{m-1} \ln\left(\sin\left(\frac{\pi j}{p}\right)\!\right)$. To achieve the final result use the product 
	\begin{equation}\label{Eq. psin}\prod_{j=1}^{m-1} \sin\left(\frac{\pi j}{2m}\right)=\frac{\sqrt{m}}{2^{m-1}}.\end{equation} This formula is a consequence of the better known\footnote{ $T(x)=x^N-1=(x-1)(x-\omega_N)\cdots (x-\omega_N^{N-1})$ satisfies $(1-\omega_N)\cdots (1-\omega_N^{N-1})=T'(1)=N$.
		The formula then follows by taking the modulus on both sides, as $|1-\omega_N^j|=2\sin(\pi j/N)$, see (\ref{Eq. mod 1-ei0}).} $
	\prod_{j=1}^{N-1} \sin\left(\frac{\pi j}{N}\right)=\frac{N}{2^{N-1}}\,$. Indeed, if $N=2m$, write this product as $\prod_{j=1}^{m-1} \sin\left(\frac{\pi j}{2m}\right)\cdot \prod_{j=1}^{m-1} \sin\left(\frac{\pi (m+j)}{2m}\right)=\frac{m}{4^{m-1}}.$ Taking into account that  $\sin\left(\pi/2+\theta\right)=\cos(\theta)=\sin\left(\pi/2-\theta\right)$, $$\prod_{j=1}^{m-1} \sin\left(\frac{\pi (m+j)}{2m}\right)=\prod_{j=1}^{m-1} \sin\left(\frac{\pi (m-j)}{2m}\right)=\prod_{j=1}^{m-1} \sin\left(\frac{\pi j}{2m}\right),$$ which is enough to conclude (\ref{Eq. psin}).
	
	\
	
	\textit{Step 5: $p=2m+1$.}	Make the change of variables $t=s^2$, and apply the even case to find that $\int_0^1 \frac{P(t)}{1-t^{p}}dt=2\int_0^1 \frac{sP(s^2)}{1-s^{2p}}ds$ is equal to \begin{align*}\sum_{l=1}^{p-1} \frac{a_{l-1}}{p} \left[\ln(4p)+\frac{\pi}{2} \cot\left(\!\frac{\pi l}{p}\!\right)-2\sum_{j=1}^{p-1} \cos\left(\frac{2\pi lj}{p}\right) \ln\left(\!\sin\left(\frac{\pi j}{2p}\right)\!\right)\right].
	\end{align*} To correct the inner sum, put the terms indexed by $j$ and $2m-(j-1)=p-j$ together, $j=1,\dots,m$. Then, use $\sin(\pi/2-\theta)=\cos(\theta)$, $\sin(2\theta)=2\sin(\theta)\cos(\theta)$, and the sum $2\sum_{j=1}^{m} \cos\left(\frac{2\pi jl}{2m+1}\right)=-1$ justified using (\ref{Eq. sum cos}), to show the inner sum is ${\ln(2)}/{2}+\sum_{j=1}^{m} \cos\left({2\pi lj}/{p}\right) \ln\left(\sin\left({\pi j}/{p}\right)\right)$, what leads to the final result.

\end{proof}

Theorem \ref{Main Thm.} and Corollary \ref{Coro. Ipq} can easily be applied to small values of $p$. For instance, \begin{align*}1-\frac{1}{4}+\frac{1}{7}-\frac{1}{10}+\cdots=
\int_0^1\frac{dt}{1+t^3}=\frac{\pi/3}{2\sin(\pi/3)}-\frac{2}{3}\cos\left(\pi/3\right)\ln(\sin(\pi/6)),
\end{align*} which gives ${\ln (2)}/{3}+ {\sqrt{3}\pi}/{9}$. However, it is a different story for larger values of $p$. Consider for instance $p=5$, still manageable by hand, for which 
$$e^{2\pi i/5}=\frac{\sqrt{5}-1}{4}+i\frac{\sqrt{5+\sqrt{5}}}{2\sqrt{2}},\quad e^{i\pi/5}=\frac{\sqrt{5}+1}{4}+i\frac{\sqrt{5-\sqrt{5}}}{2\sqrt{2}}.$$ The trick is observing $e^{2\pi i/5}$ satisfies $(z+1/z)^2+(z+1/z)=1$, which is solved using the quadratic formula twice. Paying attention to signs we get the correct root, and then $e^{i\pi/5}$ is extracted from the double-angle formulas. After this, the values $\psi(j/5)$, $j=1,2,3,4$ can be found \cite[p. 147]{Jensen} and results such as \begin{align*}
1-\frac{1}{2}-\frac{1}{3}+\frac{1}{4}+\frac{1}{6}-\frac{1}{7}-\cdots=\int_0^1 \frac{1-t-t^2+t^3}{1-t^5}dt=\frac{2}{\sqrt{5}}\ln\left(\frac{1+\sqrt{5}}{2}\right),
\end{align*} are achieved. In general, these calculations are possible for $p=2^rp_1\cdots p_t$, where $p_l=2^{(2^{m_l})}+1$ are distinct Fermat primes, because $\omega_p^j$ can be found explicitly using square roots. In fact, this is the condition for the regular polygon with $p$ sides to be constructible with ruler and compass! Despite this, things can get out of hands... just recall Gauss' formula for $\cos(2\pi/17)$ in the construction of the heptadecagon \cite{ConwayGuy}.

Computations as in Theorem \ref{Main Thm.}
were used by Dirichlet to find $L(1,\left(\frac{\cdot}{\cdot}\right))$, for the Legendre symbol \cite{Apostol}, in his famous \textit{Vorlesungen \"{u}ber Zahlentheorie} published posthumously by Dedekind in 1863 \cite[$\S$103]{Dirichlet}. For instance, he found \begin{equation}\label{Eq. pi/7}
\sum_{n=1}^\infty \left(\frac{n}{p}\right)\frac{1}{n}=-\frac{i^{(p-1)^2/4}}{\sqrt{p}}\sum_{j=1}^{p-1} \left(\frac{j}{p}\right)\left(\ln\left(\sin\left(\frac{j\pi}{p}\right)\right)-\frac{\pi j}{p}i\right),
\end{equation} valid for any odd, square free $p>1$. If $p=7$, we have  $\left(\frac{1}{7}\right)=\left(\frac{2}{7}\right)=\left(\frac{4}{7}\right)=1$, and $\left(\frac{3}{7}\right)=\left(\frac{5}{7}\right)=\left(\frac{6}{7}\right)=-1$. Therefore, since (\ref{Eq. pi/7}) is real,$$1+\frac{1}{2}-\frac{1}{3}+\frac{1}{4}-\frac{1}{5}-\frac{1}{6}+\cdots=\frac{\pi}{\sqrt{7}},$$ far from what Theorem \ref{Main Thm.} gives... To reconcile both answers we would  need relations such as $\cot\left({\pi}/{7}\right)+\cot\left({2\pi}/{7}\right)-\cot\left({3\pi}/{7}\right)=\sqrt{7}$, see \cite{Glaisher} for further discussion, and \cite[p. 278--280]{Stein} for the use of finite Fourier series. The key here is Gauss' sums unveiling these relations, but this is already part of another story.


\end{document}